\newtheorem{lemma}{Lemma}
\newtheorem{teor}{Theorem}
\newtheorem{cor}{Corollary}
\DeclareMathOperator{\Maxdim}{Maxdim}
\DeclareMathOperator{\Mindim}{Mindim}
\DeclareMathOperator{\PSL}{PSL}
\DeclareMathOperator{\PGL}{PGL}
\DeclareMathOperator{\soc}{soc}
\DeclareMathOperator{\ASL}{ASL}
\DeclareMathOperator{\AGL}{AGL}
\newcommand\PGammaL{{\rm P}\Gamma{\rm L}}
\begin{document}

\title{Maximal irredundant families of minimal size in the alternating group}
\date{}
\author{Martino Garonzi}
\address[Martino Garonzi]{Departamento de Matem\'atica, Universidade de Bras\'ilia, Campus Universit\'ario Darcy Ribeiro, Bras\'ilia-DF, 70910-900, Brazil}
\email{mgaronzi@gmail.com}
\thanks{MG acknowledges the support of the Funda\c{c}\~{a}o de Apoio \`a Pesquisa do Distrito Federal (FAPDF) and the Coordena\c{c}\~{a}o de Aperfei\c{c}oamento de Pessoal de N\'ivel Superior - Brasil (CAPES)}
\author{Andrea Lucchini}
\address[Andrea Lucchini]{Dipartimento di Matematica \lq\lq Tullio Levi-Civita", Universit\`a di Padova, Via Trieste 63, 35131 Padova, Italy}
\email{lucchini@math.unipd.it}
 
\begin{abstract}
Let $G$ be a finite group. A family $\mathcal{M}$ of maximal subgroups of $G$ is called ``irredundant'' if its intersection is not equal to the intersection of any proper subfamily. $\mathcal{M}$ is called ``maximal irredundant'' if $\mathcal{M}$ is irredundant and it is not properly contained in any other irredundant family. We denote by $\Mindim(G)$ the minimal size of a maximal irredundant family of $G$. In this paper we compute $\Mindim(G)$ when $G$ is the alternating group on $n$ letters.
\end{abstract}

\maketitle

For $G$ a finite group and $\mathcal{M}$ a family of maximal subgroups of $G$, $\mathcal{M}$ is called irredundant if its intersection is not equal to the intersection of any proper subfamily of $\mathcal{M}$. The ``maximal dimension'' of $G$, denoted $\Maxdim(G)$, is the maximal size of an irredundant family of $G$. R. Fernando in \cite{fernando} observed that setting $m(G)$ to be the maximal size of a generating set of $G$ that does not properly contain any other generating set and $i(G) := \max \{m(H)\ :\ H \leq G\}$, we have $$m(G) \leq \Maxdim(G) \leq i(G).$$ As Fernando observed, the work of Whiston \cite{whiston} easily implies that we have the following equalities: $m(S_n)=i(S_n)=n-1$, $m(A_n)=i(A_n)=n-2$ (in Fernando's terminology, $S_n$ and $A_n$ are flat). An immediate consequence is that $\Maxdim(S_n)=n-1$ and $\Maxdim(A_n)=n-2$.

\ 

Here we introduce the dual concept of ``minimal dimension'': $\Mindim(G)$ is the minimal size of a maximal irredundant family of $G$. When $G$ is a primitive permutation group, that is, a group acting faithfully and transitively on a set $\Omega$ and whose point stabilizers are maximal subgroups, an easy way to bound $\Mindim(G)$ from above is given by the base size of $G$. A base of $G$ is a subset $B$ of $\Omega$ such that the intersection of the stabilizers in $G$ of the elements of $B$ is trivial. The minimal cardinality of a base of $G$ is denoted $b(G)$. Since any redundant family contains an irredundant subfamily with the same intersection, and any irredundant family intersecting in $\{1\}$ is a maximal irredundant family, we obtain that $\Mindim(G) \leq b(G)$. By looking at the existent literature (considering base sizes of non-standard actions \cite{conjcam}) it is easy to prove that $\Mindim(S) \leq 7$ for every finite nonabelian simple group $S$. Indeed in the case of groups of Lie type we can always build up a non-standard action by taking primitive actions with stabilizer in the Aschbacher class $\mathscr{C}_3$, stabilizers of field extensions of the form $GL_m(q^r)$ in $GL_n(q)$, where $r$ is a prime dividing $n$ and $m=n/r$ (see \cite{kl} Tables 3.5.H and 4.3.A). 

\ 

In this paper we compute $\Mindim(A_n)$ where $A_n$ is the alternating group on $n$ letters. Our main result is the following.

\begin{teor} \label{main}
Let $n \geq 4$ be an integer. Then $\Mindim(A_n) \in \{2,3\}$ and we have $\Mindim(A_n)=3$ if and only if one of the following happens:
\begin{itemize}
\item $n=2p$ with $p \neq 11$ an odd prime and $n$ is not of the form $q+1$ where $q$ is a prime power.
\item $n \in \{6,7,8,11,12\}$.
\end{itemize}
\end{teor}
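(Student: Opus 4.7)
The plan is to prove separately: (a) $\Mindim(A_n) \leq 3$ for every $n \ge 4$; (b) $\Mindim(A_n) \leq 2$ whenever $n$ is not in the exceptional list; (c) $\Mindim(A_n) \geq 3$ whenever $n$ is in the exceptional list. The unifying tool is the bound $\Mindim(G) \leq b(G)$ from the introduction, together with the following observation: if maximal subgroups $M_1, \ldots, M_k$ of $G$ intersect trivially, then after passing to an irredundant subfamily they automatically form a maximal irredundant family, because any further maximal subgroup added still leaves the intersection equal to $1$. Thus (a) and (b) will be reduced mostly to producing primitive actions of $A_n$ of base size at most $3$ (resp.\ $2$).

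\textbf{Upper bounds.} For (a), for each $n \ge 4$ I would exhibit a maximal subgroup $H$ of $A_n$ with $b(A_n, H) \le 3$; for (b), I would ensure $b(A_n, H) = 2$. Whenever $A_n$ has a primitive non-standard maximal subgroup $H$ (affine, diagonal, product-action, or almost simple acting non-naturally), the sharp estimates of Burness--Guralnick--Saxl typically give $b(A_n, H) = 2$. Concrete sources are $\PSL_2(q)$ acting $2$-transitively on $q+1$ points when $n = q+1$ for a prime power $q$; $\AGL_k(p) \cap A_n$ when $n = p^k$; the embedding $A_m \hookrightarrow A_n$ via the action on $r$-subsets when $n = \binom{m}{r}$; and $M_{22}$ when $n = 22$ (which is exactly why $p = 11$ is removed from the exceptional list). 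For the $n$ in the exceptional list, where either there is no primitive maximal subgroup of $A_n$ (as in $n = 2p$ with $n \ne q+1$) or the primitive ones all have base size $\ge 3$ (as in the small cases), I would establish (a) by writing down an explicit irredundant triple of standard-type subgroups: for $n = 2p$ a natural choice is a pair-partition stabilizer $(S_2 \wr S_p) \cap A_n$, a halving-partition stabilizer $(S_p \wr S_2) \cap A_n$, and a point stabilizer, chosen in general position.

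\textbf{Lower bound in the exceptional cases.} The core of the theorem is (c). For $n \in \{6, 7, 8, 11, 12\}$ I would rely on the ATLAS list of maximal subgroups and verify directly, for each unordered pair $\{M_1, M_2\}$ of maximal subgroups of $A_n$ up to conjugacy, the existence of a third maximal $M_3$ such that $\{M_1, M_2, M_3\}$ is irredundant, i.e.\ $M_1 \cap M_2 \not\le M_3$, $M_1 \cap M_3 \not\le M_2$, and $M_2 \cap M_3 \not\le M_1$. For the infinite family $n = 2p$ with $p$ an odd prime, $p \ne 11$, and $n \ne q+1$, I would invoke the classical classification of primitive groups of degree $2p$ (Burnside, Wielandt) to reduce to the case where the only maximal subgroups of $A_n$ are the intransitive $(S_k \times S_{n-k}) \cap A_n$ with $1 \le k < p$ together with the two imprimitive families $(S_2 \wr S_p) \cap A_n$ and $(S_p \wr S_2) \cap A_n$. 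I would then split on the type of $\{M_1, M_2\}$ (intransitive/intransitive, intransitive/imprimitive, imprimitive/imprimitive) and in each case construct the witness $M_3$ from the same short list, using the combinatorics of subsets and block systems.

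\textbf{Main obstacle.} The technical crux is the witness construction in the infinite family $n = 2p$, especially when $M_1 \cap M_2$ is large: the $M_3$ must simultaneously fail all three inclusion conditions above, which requires detailed control of the sublattice of intersections in terms of the underlying combinatorial data. The tightest sub-case should be when $M_1$ and $M_2$ are both imprimitive with block systems that refine one another, as there the pairwise intersections are large and tend to lie in many other maximal subgroups; this is where the asymmetry between block size $2$ and block size $p$ has to be exploited. The exclusions $n \ne q+1$ and $p \ne 11$ become essential at precisely this step: in the excluded cases the extra primitive maximal subgroup ($\PSL_2(q)$ or $M_{22}$) either supplies a trivial-intersection pair (giving $\Mindim(A_n) = 2$) or serves as the needed witness $M_3$ that blocks the argument for $\Mindim(A_n) \ge 3$.
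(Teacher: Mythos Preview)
Your plan is essentially the paper's proof: Burness--Guralnick--Saxl base sizes handle the generic case, the small exceptions $n\in\{6,7,8,11,12\}$ are done by exhaustive check, and for $n=2p$ in the exceptional family the classification of primitive groups of degree $2p$ reduces the maximal subgroups to the intransitive and two imprimitive types, after which a combinatorial case split over the type of $\{M_1,M_2\}$ produces the witness $M_3$.

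Two small corrections. First, for the bound $\Mindim(A_n)\le 3$ when $n$ is even the paper does not use your mixed triple $(S_2\wr S_p,\,S_p\wr S_2,\,\text{point stabilizer})$ but a cleaner device: the left and right regular representations of a dihedral group of order $n$ centralize each other, so two suitable copies of $(S_2\wr S_{n/2})\cap A_n$ intersect in a fixed-point-free subgroup, and adding any point stabilizer gives a trivial intersection. Second, for $n=2p$ with $p$ an odd prime a block system with blocks of size $2$ can never refine one with blocks of size $p$ (the latter blocks have odd size), so your ``tightest sub-case'' does not arise; in the paper the genuinely delicate case is when $M_1$ and $M_2$ are \emph{both} of type $S_2\wr S_p$, where one must explicitly construct an even permutation in $M_1\cap M_2$ that moves a prescribed point, built by threading through the two pair-partitions.
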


Explicitly determining for which odd primes $p$ we have $\Mindim(A_{2p})=3$ is a hard number-theoretical problem: one needs to solve the equation $2p=q+1$ where $p$ is a prime and $q$ is a prime power.

\section{Preliminary lemmas}

Before proving the main result we need some lemmas.

\begin{lemma} \label{arit2p}
Let $p$ be an odd prime and let $n=2p$. Then the following are equivalent.
\begin{enumerate}
\item $S_n$ has primitive subgroups different from $A_n$ and $S_n$.
\item $A_n$ has primitive subgroups different from $A_n$.
\item $S_n$ has primitive maximal subgroups different from $A_n$.
\item $A_n$ has primitive maximal subgroups.
\item $n$ is equal to $22$ or of the form $q+1$ where $q$ is a prime power.
\end{enumerate}
\end{lemma}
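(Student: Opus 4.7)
The plan is to establish the cycle of implications $(3)\Rightarrow(1)\Rightarrow(5)\Rightarrow(3)$ and $(4)\Rightarrow(2)\Rightarrow(1)$ together with $(5)\Rightarrow(4)$, which yields the equivalence of all five statements. The trivial steps are $(3)\Rightarrow(1)$, $(4)\Rightarrow(2)$, and $(2)\Rightarrow(1)$: any primitive subgroup of $A_n$ is, via the inclusion $A_n\hookrightarrow S_n$, a primitive subgroup of $S_n$, and if it is distinct from $A_n$ it is also distinct from $S_n$ (being contained in $A_n$).

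The substantive implications $(1)\Rightarrow(5)$ would proceed via the classical classification of primitive permutation groups of degree $2p$ with $p$ an odd prime. A theorem of Wielandt (predating CFSG) asserts that every primitive group of such degree is $2$-transitive, so one can invoke the CFSG-based list of $2$-transitive groups (as collected in e.g.\ Dixon--Mortimer, \emph{Permutation Groups}). Going through the O'Nan--Scott types, the affine type would force $2p$ to be a prime power, impossible for $p$ odd; diagonal and product types are incompatible with the factorization $2\cdot p$; and in the almost simple case the only socles whose $2$-transitive actions have degree $2p$ are $A_{2p}$, $\PSL_2(q)$ in its action on the projective line of size $q+1$, and $M_{22}$ at $n=22$. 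Hence the existence of a primitive subgroup of $S_n$ different from $A_n$ and $S_n$ forces $n=22$ or $n=q+1$ with $q$ a prime power, which is (5).

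For $(5)\Rightarrow(4)$: if $n=q+1$, the $3$-transitive action of $\PSL_2(q)$ on $\mathbb{P}^1(\mathbb{F}_q)$ is primitive, and since $n=2p\geq 6$ we have $q\geq 5$, so $\PSL_2(q)$ is simple and its image in $S_n$ lies in $A_n$. Any maximal subgroup $M$ of $A_n$ containing it is transitive; since any transitive overgroup of a primitive group is primitive, $M$ is a primitive maximal subgroup of $A_n$. When $n=22$, replace $\PSL_2(q)$ by $M_{22}$ (which is simple, hence contained in $A_{22}$). For $(5)\Rightarrow(3)$: when $n=q+1=2p$, the relation $q=2p-1$ forces $q$ odd, so $\PGL_2(q)$ strictly contains $\PSL_2(q)$ and contains the image of the matrix $\operatorname{diag}(\alpha,1)$ with $\alpha$ a primitive root of $\mathbb{F}_q$; this element acts on $\mathbb{P}^1(\mathbb{F}_q)\setminus\{0,\infty\}$ as a single $(q-1)$-cycle, which is odd since $q-1=2p-2$ is even, so $\PGL_2(q)\not\leq A_n$. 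Then any maximal subgroup of $S_n$ containing $\PGL_2(q)$ is primitive and different from $A_n$. For $n=22$ use $\Aut(M_{22})=M_{22}.2$ in the same way.

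The main obstacle is a clean appeal to the classification of primitive groups of degree $2p$: formulated across several sources and dependent on CFSG through the $2$-transitive list, it must be invoked at precisely the right level of generality. Everything else (the parity computation in $\PGL_2(q)$, the simplicity argument placing $\PSL_2(q)$ inside $A_n$, and the elementary ``primitive overgroup'' observation) is routine.
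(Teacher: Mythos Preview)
Your approach mirrors the paper's almost exactly: the same parity computation in $\PGL_2(q)$ via $\operatorname{diag}(\alpha,1)$, the same appeal to $M_{22}$ at $n=22$, and the same reduction of $(1)\Rightarrow(5)$ to the list of $2$-transitive groups.

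Two corrections are needed, however. First, Wielandt's theorem does \emph{not} assert that every primitive group of degree $2p$ is $2$-transitive; it only gives rank at most $3$. The reduction to rank $2$ requires CFSG (this is the Liebeck--Saxl result the paper cites) and carries a genuine exception at $p=5$: there exist rank-$3$ primitive groups of degree $10$, for instance $A_5$ or $S_5$ acting on $2$-element subsets. Your argument for $(1)\Rightarrow(5)$ therefore has a gap at $p=5$, which the paper closes by observing directly that $10=9+1$, so condition $(5)$ holds regardless. Second, for odd $q$ (and $q=2p-1$ is always odd here) the group $\PSL_2(q)$ is $2$-transitive but not $3$-transitive on the projective line; this slip is harmless, since $2$-transitivity already yields the primitivity you need.
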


\begin{proof}
$(2)$ and $(3)$ clearly imply $(1)$, and $(2) \Leftrightarrow (4)$ is clear as a permutation group containing a primitive one is primitive. We prove that $(5)$ implies $(1)$, $(2)$, $(3)$ and $(4)$. Suppose $(5)$ holds. If $n=22$ then $A_n$ contains a primitive Mathieu subgroup $\mbox{M}_{22}$ and this gives the result, now assume $n \neq 22$. Consider $G=\PGL_2(q) \leq S_n$ where $n=2p=q+1$, $q$ a prime power, and $G$ is acting in the usual way on the projective line. Since $p \geq 3$ we have $q \geq 5$ hence $G$ is an almost-simple group with socle $\PSL_2(q)$. Being $2$-transitive, $G$ is a primitive subgroup of $S_n$. We now prove that $G \cap A_n = \PSL_2(q)$. The inclusion $\supseteq$ is clear as $G \cap A_n$ is a normal subgroup of $G$ of index at most $2$. We are left to prove that $G$ is not contained in $A_n$. Let $g \in \PGL_2(q)$ be the element corresponding to the diagonal matrix having $u$ and $1$ on the diagonal, where $u$ is a generator of the multiplicative group $\mathbb{F}_q^{\ast}$. $g$ fixes exactly two $1$-dimensional subspaces and cyclically permutes the remaining $q-1=2p-2$, now since $2p-2$ is even $g$ is an odd permutation, therefore $G$ is a primitive subgroup of $S_n$ not contained in $A_n$.

\ 

We are left to prove that $(1) \Rightarrow (5)$. Let $G$ be a primitive group of degree $n=2p$. It was shown by Wielandt \cite{wie} that $G$ has rank $2$ or $3$, and later using the classification of finite simple groups it was shown that $G$ is $2$-transitive (that is, has rank $2$) with the only exception of $p=5$ (see \cite{cl2p}). We can use the known classification of $2$-transitive groups. Suppose $p \neq 5$. Since $G$ is not affine (because $n$ is not a prime power) $G$ belongs to \cite[Table 7.4]{cam} and it follows that $n$ is equal to $22$ or is of one of the following types.
\begin{itemize}
\item $(q^d-1)/(q-1)$, $d \geq 2$, $(d,q) \neq (2,2),(2,3)$, $q$ a prime power. We have $2p=(q^d-1)/(q-1)$, hence $d$ is a prime because if $d=ab$ with $a,b$ integers then $2p=(q^{ab}-1)/(q^b-1) \cdot (q^b-1)/(q-1)$, a contradiction. On the other hand $(q^d-1)/(q-1) = 1+q+q^2+\ldots+q^{d-1}$ is odd if $d$ is odd, therefore $d=2$ and $2p=q+1$.
\item $q^2+1$, $q=2^{2d+1}>2$. This cannot happen being $2p$ even and $q^2+1$ odd.
\item $q^3+1$, $q \geq 3$ a prime power. Then $2p=q^3+1=(q+1)(q^2-q+1)$ shows that this cannot happen.
\end{itemize}
If $p=5$ then $n=10$ and $S_n$ has the primitive maximal subgroup $\PGammaL_2(9)$, so the result holds in this case as $10=9+1$.
\end{proof}

Observe that from the above proof it follows that

\begin{cor}
Let $p$ be an odd prime and let $G \leq S_{2p}$ be a primitive group of degree $2p$ not containing the alternating group $A_{2p}$. Then one of the following holds.
\begin{itemize}
\item $p=11$ and $\soc(G) \cong \mbox{M}_{22}$ in its primitive action of degree $22$.
\item $2p-1=q \geq 5$ is a prime power and $\soc(G)$ is isomorphic to $\PSL_2(q)$ acting on the projective line.
\end{itemize}
\end{cor}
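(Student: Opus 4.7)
The plan is to read the corollary off the case analysis already carried out in the proof of Lemma \ref{arit2p}. The hypothesis that $G$ does not contain $A_{2p}$, combined with $G \leq S_{2p}$ primitive, is exactly condition (1) of the lemma. Hence the lemma already tells us that $n = 2p$ equals $22$ or is of the form $q+1$ for a prime power $q$; what remains is to identify $\soc(G)$ in each case.

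First I would handle $p = 5$ (i.e.\ $n = 10$) separately, since this is the case for which the proof of Lemma \ref{arit2p} had to detour through the rank-3 exception of Wielandt. All primitive groups of degree $10$ different from $A_{10}$ and $S_{10}$ have socle $\PSL_2(9) \cong A_6$ acting on the projective line $\mathbb{P}^1(\mathbb{F}_9)$, and since $10 = 9 + 1$ this matches the second bullet of the statement.

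For $p \neq 5$, I would re-invoke the step already used in the lemma: by Wielandt \cite{wie} together with CFSG (via \cite{cl2p}), $G$ is $2$-transitive, hence almost simple of a type listed in \cite[Table 7.4]{cam} (it cannot be affine because $n = 2p$ is not a prime power). The case-by-case elimination performed in the lemma's proof leaves precisely two surviving rows of that table. The $\PSL_d(q)$ row forces $d = 2$ and $n = q+1$, and there $\soc(G) = \PSL_2(q)$ in its natural $2$-transitive action on the projective line; since $p \geq 3$ we automatically have $q = 2p-1 \geq 5$, so $\PSL_2(q)$ really is the (nonabelian simple) socle. The Mathieu row forces $n = 22$, whence $p = 11$ and $\soc(G) \cong \mathrm{M}_{22}$ in its primitive degree-$22$ action.

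The only delicate point is ensuring that no other line of \cite[Table 7.4]{cam} meets the parity and factorization constraints imposed by $n = 2p$ with $p$ an odd prime, but this was already verified inside the proof of Lemma \ref{arit2p} (the $q^2+1$ row is odd, the $q^3+1$ row factors as $(q+1)(q^2-q+1)$, and all sporadic $2$-transitive degrees other than $22$ fail the shape $2p$). So the corollary follows with essentially no new computation beyond naming the socles in the two surviving cases.
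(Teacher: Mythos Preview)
Your proposal is correct and follows exactly the approach the paper intends: the paper gives no separate proof of the corollary at all, merely writing ``Observe that from the above proof it follows that\ldots'', so your argument simply makes explicit the socle identification that is implicit in the row-by-row analysis of \cite[Table 7.4]{cam} already carried out in the proof of Lemma~\ref{arit2p}. Your added care in treating $p=5$ separately and in recording $q\geq 5$ is consistent with, and slightly more detailed than, what the paper leaves to the reader.
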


\section{Proof of Theorem \ref{main}}

\begin{lemma} \label{small}
$\Mindim(A_n) \leq 3$ for all $n \geq 4$ even and for $n=7$, $n=11$. Moreover $\Mindim(A_n)=3$ for $n \in \{6,7,8,11,12\}$.
\end{lemma}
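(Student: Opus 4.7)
The statement splits into an upper bound $\Mindim(A_n)\le 3$ (for $n$ even or $n\in\{7,11\}$) and a matching lower bound $\Mindim(A_n)\ge 3$ for the five listed values. I would attack the two halves with disjoint techniques: an order-counting argument for the lower bound, and explicit constructions of size-three maximal irredundant families for the upper bound.

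For the lower bound, one first observes that a maximal irredundant family of $A_n$ of size exactly two is precisely a pair of maximal subgroups $M_1,M_2$ with $M_1\cap M_2=\{1\}$: if the intersection is trivial, every third candidate is redundant; if it is nontrivial, one can adjoin a third maximal subgroup not containing $M_1\cap M_2$. Hence $\Mindim(A_n)\ge 3$ is equivalent to the assertion that every pair of maximal subgroups of $A_n$ has nontrivial intersection, which I verify via the elementary inequality $|M_1\cap M_2|\ge |M_1|\cdot|M_2|/|A_n|$. It is enough to check that the square of the smallest maximal subgroup order exceeds $|A_n|$; from the classification of maximal subgroups of $A_n$ the relevant orders are $24$ (from $S_4$) in $A_6$, $72$ (from $(S_4\times S_3)\cap A_7$) in $A_7$, $168$ (from $\PSL_2(7)$) in $A_8$, $7920$ (from $M_{11}$) in $A_{11}$, and $15552$ (from $(S_3\wr S_4)\cap A_{12}$) in $A_{12}$. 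Each square exceeds $|A_n|$, the tightest being $n=12$ where $15552^2=241{,}864{,}704>239{,}500{,}800=|A_{12}|$, forcing $|M_1\cap M_2|\ge 2$ for every pair.

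For the upper bound I would exhibit, for each relevant $n$, a triple $\{M_1,M_2,M_3\}$ of maximal subgroups of $A_n$ with pairwise nontrivial intersections and trivial triple intersection; such a triple is automatically a maximal irredundant family of size at most three. For $n=4$, the Klein four group and a Sylow $3$-subgroup already intersect trivially, giving size two. For even $n$ admitting a non-standard primitive maximal subgroup $M$ (such as $\PSL_2(q)\cap A_n$ when $n=q+1$, or $M_{22}$ when $n=22$), I would try three conjugates of $M$ and check that their intersection collapses to the identity. For the remaining $n=2p$ with no non-standard primitive maximal subgroup, I would combine intransitive stabilizers $(S_k\times S_{n-k})\cap A_n$ with imprimitive matching stabilizers $(S_2\wr S_p)\cap A_n$, choosing them so that their joint fixed structure (a graph together with a distinguished subset) has trivial automorphism group. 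For the odd cases $n=7,11$ I would use the primitive maximal subgroups $\PSL_3(2)\le A_7$ and $M_{11}\le A_{11}$ combined with an intransitive $A_{n-1}$, and verify triviality of the triple intersection using the multiple transitivity of the corresponding actions. The main obstacle is that naive choices can fail: for example, the three coordinate partitions of $\{1,\ldots,8\}\cong\mathbb{F}_2^3$ into two $4$-sets have joint stabilizer equal to the translation subgroup $\mathbb{F}_2^3$ of order $8$, so one must break the residual affine symmetry by perturbing the partitions or mixing in an intransitive subgroup; similarly for $n=12$ one must carefully dodge the very small imprimitive maximal subgroups flagged in the lower-bound step.
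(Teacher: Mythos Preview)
Your lower-bound argument has a genuine gap. You assert that a pair $\{M_1,M_2\}$ of maximal subgroups is maximal irredundant if and only if $M_1\cap M_2=\{1\}$, but only the forward implication is clear. For the converse you say that when $M_1\cap M_2\neq\{1\}$ ``one can adjoin a third maximal subgroup not containing $M_1\cap M_2$''. However, irredundancy of $\{M_1,M_2,M_3\}$ demands three separate conditions, namely $M_1\cap M_2\not\subseteq M_3$, $M_1\cap M_3\not\subseteq M_2$ and $M_2\cap M_3\not\subseteq M_1$, and you have only arranged the first. The equivalence you rely on is in fact false in general: in $Q_8$ any two of the three maximal subgroups meet in the centre $\{\pm 1\}$, yet every such pair is maximal irredundant because the third maximal subgroup also contains $\{\pm 1\}$. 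So some special feature of $A_n$ must be invoked, and triviality of the Frattini subgroup by itself is not obviously enough. This is exactly the content the paper supplies (by a GAP check): for every pair $A,B$ of maximal subgroups of $A_n$ with $n\in\{6,7,8,11,12\}$ it locates a maximal $C$ for which $\{A,B,C\}$ is irredundant. Your order inequality $|M_1\cap M_2|\ge |M_1|\,|M_2|/|A_n|$ is therefore necessary but not sufficient for the conclusion you want. (A small factual slip along the way: $\PSL_2(7)$ is \emph{not} a maximal subgroup of $A_8$; it does not appear in the list of maximals of $A_8\cong L_4(2)$. The smallest maximal subgroup of $A_8$ has order $360$, so the numerics would survive, but the premise is wrong.)

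On the upper bound your proposal is a case-by-case sketch, whereas the paper gives a single uniform construction for all even $n\ge 4$: embed the dihedral group of order $n$ into $S_n$ via its regular representation; its centralizer in $S_n$ is again this dihedral group (the other regular representation), so it is the intersection of the centralizers of two fixed-point-free involutions, each of which lies in a maximal subgroup of type $S_2\wr S_{n/2}$; intersecting further with a point stabilizer kills the fixed-point-free dihedral group, and passing to $A_n$ gives three maximal subgroups with trivial common intersection. This avoids entirely the split into ``has a non-standard primitive maximal'' versus ``does not'' that you propose. For $n=7$ the paper simply exhibits three intransitive set-stabilizers with trivial intersection, and for $n=11$ three conjugate copies of $M_{11}$ found by machine.
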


\begin{proof}
Suppose $n=2m \geq 4$ is even. Consider the dihedral group $D$ of order $n$ embedded (via the regular representation) in $S_n$. This gives two subgroups, $H$ and $K$ (the left and right regular representations), of $S_n$ which commute pointwise. Since $H$ is transitive, $C_{S_n}(H)$ is semiregular, in particular its order is at most $n$, implying that $C_{S_n}(H)=K$ and similarly $C_{S_n}(K)=H$. In particular $H$ is an intersection of centralizers of two fixed-point-free involutions, which are maximal subgroups of the form $S_2 \wr S_m$. Since the nontrivial elements of $H$ are fixed-point-free the intersection between $H$ and a point stabilizer is trivial. Intersecting with $A_n$ we obtain $\Mindim(A_n) \leq 3$.

In the alternating group $A_7$ the intersection of the setwise stabilizers of $\{1,2,3,4\}$, $\{1,2,5\}$ and $\{2,5,6\}$ is trivial, and such stabilizers are maximal subgroups of $A_7$, therefore $\Mindim(A_7) \leq 3$. The first, second and fourth subgroups of the sixth conjugacy class of maximal subgroups of $A_{11}$ stored in \cite{gap} are isomorphic to $\mbox{M}_{11}$ and have trivial intersection, implying  $\Mindim(A_{11}) \leq 3$. Let now $n \in \{6,7,8,11,12\}$. In order to prove that $\Mindim(A_n) \geq 3$, using \cite{gap} we defined a function that builds the list of pairs $(A,B)$ where $A$ is a fixed representative of every conjugacy class of maximal subgroups of $A_n$ and $B$ is any maximal subgroup of $A_n$ distinct from $A$, and it returns true if and only if for each such pair there exists a maximal subgroup $C$ of $A_n$ belonging to the first conjugacy class of maximal subgroups of $A_n$ and with the property that $\{A,B,C\}$ is irredundant.
\end{proof}

We will use \cite[Corollary 1.5]{basesym}, which we state here for convenience.

\begin{teor} \label{balt}
Let $G=A_n$ with $n \geq 5$ acting primitively on a set with point stabilizer $H$. Then either $b(G)=2$ or $(n,H)$ is one of the following:
\begin{enumerate}
\item $H=(S_k \times S_{n-k}) \cap G$ with $k < n/2$.
\item $H=(S_2 \wr S_l) \cap G$ and $n=2l$.
\item $H=(S_k \wr S_l) \cap G$, $n=kl$ with $k \geq 3$, and either $l < k+2$ or $l=k+2 \in \{5,6\}$.
\item $(n,H)=(12,\mbox{M}_{12})$, $(11,\mbox{M}_{11})$, $(9,\PGammaL_2(8))$, $(8,\AGL_3(2))$, $(7,PSL_2(7))$, $(6,PSL_2(5))$, $(5,D_{10})$.
\end{enumerate}
In particular if $H$ acts primitively on $\{1,\ldots,n\}$ then $b(G)=2$ with finitely many exceptions.
\end{teor}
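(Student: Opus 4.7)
My plan is to apply the O'Nan--Scott theorem to the primitive action of $G = A_n$ on the coset space $G/H$, enumerating the possible isomorphism types of the point stabilizer $H$, and then estimate $b(G)$ type by type. For $G = A_n$, the maximal subgroups split into those that are intransitive on $\{1,\ldots,n\}$ (giving $H = (S_k \times S_{n-k}) \cap G$, the action on $k$-subsets, with $k \leq n/2$), those that are transitive imprimitive (giving $H = (S_k \wr S_l) \cap G$ with $n = kl$, $k \geq 2$, $l \geq 2$), and those that act primitively on $\{1,\ldots,n\}$; these correspond, following Cameron, to the so-called \emph{standard} actions together with the residual \emph{non-standard} ones.

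For the two standard families I would argue combinatorially. In the action on $k$-subsets, a pair $(S_1, S_2)$ partitions $\{1,\ldots,n\}$ into (at most) four Venn regions of sizes $a$, $k-a$, $k-a$, $n-2k+a$, and the pointwise stabilizer of this pair inside $A_n$ is trivial only when all four parameters are at most $2$ and at most one equals $2$; this forces $n \leq 2k+2$, which is incompatible with $k < n/2$ in the listed range and explains case (1). In the imprimitive action on partitions of $\{1,\ldots,n\}$ into $l$ blocks of size $k$, two partitions are encoded by an $l \times l$ matrix of block intersection sizes, and an elementary pigeonhole argument shows their joint stabilizer is nontrivial whenever $k = 2$, or whenever $k \geq 3$ and $l < k+2$; the borderline $l = k+2 \in \{5,6\}$ is handled separately. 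This yields cases (2) and (3).

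For the non-standard case (primitive $H$) I would use the probabilistic method of Liebeck--Shalev and Burness: the probability that two uniformly random points of $\Omega = G/H$ have nontrivial common stabilizer is bounded above by
\[
Q(G) \;=\; \sum_{x} \, |x^G| \cdot \left( \frac{\mathrm{fix}_{\Omega}(x)}{|\Omega|} \right)^{\!2},
\]
where $x$ runs over representatives of the prime-order conjugacy classes of $G$. Using the known bounds on fixed-point ratios of primitive alternating groups (Liebeck--Saxl, Guralnick--Magaard), one shows $Q(G) < 1$ whenever $n$ is sufficiently large, so that $b(G) = 2$. The short exceptional list in item (4) is then extracted by direct computation in \cite{gap} for the finitely many remaining degrees.

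The main obstacle is the fixed-point-ratio input needed in the non-standard case: this is essentially the content of the Cameron--Kantor base size conjecture for alternating groups, and it requires the refined case-by-case fixed-point bounds rather than an elementary estimate, together with a careful identification of the genuine small exceptions in (4). The combinatorial cases (1)--(3), though notationally delicate, reduce to Venn-region and incidence-matrix bookkeeping, and the finite list (4) is a direct computer check.
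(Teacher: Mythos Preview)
The paper does not prove this theorem at all: it is quoted verbatim as \cite[Corollary~1.5]{basesym} (Burness--Guralnick--Saxl), introduced with ``which we state here for convenience'', and used as a black box in the proof of Theorem~\ref{main}. So there is no ``paper's own proof'' to compare against; your proposal is really a sketch of the argument in the cited source rather than of anything in the present paper.

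That said, your outline has the right architecture for how \cite{basesym} actually proceeds --- split $H$ into intransitive, transitive imprimitive, and primitive on $\{1,\dots,n\}$, and in the last case invoke fixed-point-ratio estimates and the probabilistic method --- but the logical direction in your combinatorial cases is inverted. The theorem asserts that \emph{outside} the listed families one has $b(G)=2$; there is nothing to prove \emph{for} families (1) and (2), since they are simply declared exceptions. Your Venn-region and pigeonhole arguments establish that in cases (1), (2) and part of (3) two points typically have nontrivial common stabilizer, i.e.\ that $b(G)>2$ there, which confirms those families belong on the list but is not the content of the theorem. The substantive combinatorial work is the \emph{opposite} direction in case~(3): for $k\geq 3$ and $l\geq k+2$ (excluding $l=k+2\in\{5,6\}$) one must \emph{construct} two partitions whose common stabilizer in $A_n$ is trivial, and this is considerably more delicate than the pigeonhole bound you describe. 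Your treatment of the non-standard (primitive $H$) case via $Q(G)$ and fixed-point ratios is the correct strategy and is indeed where the depth lies.
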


We proceed to the proof of Theorem 1. Observe that $A_4$ has a base of size $2$, so $\Mindim(A_4) = 2$. Now assume $n \geq 5$. By Lemma \ref{small} we know that $\Mindim(A_n)=3$ for $n \in \{6,7,8,11,12\}$. Also, $\Mindim(A_5)=2$ because the intransitive subgroups stabilizing $\{1,2\}$ and $\{1,3\}$ have trivial intersection, $\Mindim(A_9)=2$ by Theorem \ref{balt} because $A_9$ contains the primitive maximal subgroup $\ASL(2,3)$, which is different from $\PGammaL_2(8)$, and $\Mindim(A_{10})=2$ by Theorem \ref{balt} because $A_{10}$ contains a primitive maximal subgroup by Lemma \ref{arit2p}. This means that we may assume $n \geq 13$.

\ 

Observe that, being $n \geq 13$, by Theorem \ref{balt} we may assume that for all factorizations $n = k \ell$ with $k \geq 3$ we have $\ell \leq k+1$ or $\ell = k+2 \in \{5,6\}$. Also, if $n=p$ is a prime then letting $H$ to be any maximal subgroup of $G$ containing a $p$-cycle, $H$ acts primitively on $\{1,\ldots,p\}$ so $\Mindim(G)=2$. If $\ell = k+2 \in \{5,6\}$ then $n \in \{15,24\}$, and $A_{15}$, $A_{24}$ have proper primitive subgroups (with respect to the natural action on $\{1,\ldots,n\}$), so $\Mindim(A_{15}) = \Mindim(A_{24}) = 2$. Now assume that for all such factorizations we have $\ell \leq k+1$. Suppose $n$ is a product $abc$ with $2 \leq a \leq b \leq c$. If $b \geq 3$ then $ac \leq b+1 \leq c+1$, a contradiction. If $a=b=2$ then $n=4c$ so $c \leq 4+1=5$ and $n \in \{16,20\}$. However $A_{16}$ has a proper primitive subgroup $\AGL_2(4)$, and $A_{20}$ has a proper primitive subgroup $\PSL_2(19)$. We are left to discuss the case in which $n$ is of the form $pq$ with $p \geq q$ primes. If $q \neq 2$ then $q \leq p \leq q+1$ hence $p=q$ and $n=p^2$, $A_n$ has a maximal subgroup containing $\ASL_2(p)$, which acts primitively, so the corresponding primitive action of $A_n$ has a base of size $2$. Assume now that $n=2p$ with $p$ an odd prime, so that $p \geq 7$ being $n \geq 13$. If $A_n$ has primitive maximal subgroups then $\Mindim(A_n)=2$, now assume this is not the case. By Lemma \ref{arit2p}, $n$ is of the form specified in the statement.

\ 

Assume $n$ has the form $2p$ with $p \geq 7$ an odd prime such that $2p$ is not of the form $q+1$ with $q$ a prime power and $p \neq 11$. We claim that in this case $\Mindim(A_n)=3$. By Lemma \ref{small} it is enough to prove that $\Mindim(A_n) \geq 3$. By Lemma \ref{arit2p} the maximal subgroups of $S_n$ distinct from $A_n$ and the maximal subgroups of $A_n$ act non-primitively on $\{1,\ldots,n\}$. Moreover the maximal subgroups of $A_n$ are precisely the intersections between $A_n$ and the maximal subgroups of $S_n$. We will show that if $A$ and $B$ are maximal subgroups of $S_n$ then there exists a maximal subgroup $C$ of $S_n$ such that $\{A \cap A_n,B \cap A_n,C \cap A_n\}$ is irredundant.
\begin{enumerate}
\item Suppose $A$ and $B$ are of the form $S_p \wr S_2$. Say $A$ preserves a base block $\mathscr{A}$ and $B$ preserves a base block $\mathscr{B}$ (both of size $p$). By replacing, if needed, $\mathscr{A}$ with its complement we may assume $|\mathscr{A} \cap \mathscr{B}| \geq (p+1)/2 \geq 4$ so that $|\mathscr{A} \cup \mathscr{B}| \leq 2p-4$. Let $a \in \mathscr{A}-\mathscr{B}$, $i,j \in \mathscr{A} \cap \mathscr{B}$, $b \in \mathscr{B}-\mathscr{A}$, $\ell,t,s \in \{1,\ldots,2p\}-(\mathscr{A} \cup \mathscr{B})$. Let $C$ be the maximal intransitive subgroup stabilizing $\{b,i,\ell\}$ and its complement.
\begin{itemize}
\item $(ij)(\ell t) \in (A \cap B \cap A_n)-C$.
\item $(aj)(ts) \in (A \cap C \cap A_n)-B$.
\item $(bi)(ts) \in (B \cap C \cap A_n)-A$.
\end{itemize}

\item Suppose $A$ is of the form $S_p \wr S_2$ and $B$ is of the form $S_k \times S_{n-k}$ where $1 \leq k < p$. Say $A$ stabilizes a base block $\mathscr{A}$ of size $p$ and $B$ stabilizes $\mathscr{B}$ and its complement, with $|\mathscr{B}|$ larger than $p$.

Assume first that $\mathscr{A} \subseteq \mathscr{B}$. Pick $a,x,r,s \in \mathscr{A}$, $b \in \mathscr{B}-\mathscr{A}$ and $c \in \{1,\ldots,2p\}-\mathscr{B}$, and let $C$ be the intransitive maximal subgroup stabilizing $\{a,b,c\}$ and its complement.
\begin{itemize}
\item $(ax)(rs) \in (A \cap B \cap A_n)-C$.
\item $(bc)(rs) \in (A \cap C \cap A_n)-B$.
\item $(ab)(rs) \in (B \cap C \cap A_n)-A$.
\end{itemize}
Now assume $\mathscr{A} \not \subseteq \mathscr{B}$, and let $a \in \mathscr{A}-\mathscr{B}$. Since $|\mathscr{B}| > p$ there exist two distinct elements $b,x$ in $\mathscr{B}-\mathscr{A}$. Since $|\mathscr{B}|>p$, up to replacing, if needed, $\mathscr{A}$ with its complement we may assume that $|\mathscr{A} \cap \mathscr{B}| \geq (p+1)/2$, so that there exist distinct elements $i,j,r \in \mathscr{A} \cap \mathscr{B}$. Let $C$ be the maximal intransitive subgroup preserving $\{a,b,i\}$ and its complement.
\begin{itemize}
\item $(ir)(bx) \in (A \cap B \cap A_n)-C$.
\item $(ai)(jr) \in (A \cap C \cap A_n)-B$.
\item $(bi)(jr) \in (B \cap C \cap A_n)-A$.
\end{itemize}

\item Suppose $A$ is of the form $S_p \wr S_2$ and $B$ is of the form $S_2 \wr S_p$. Let $\mathscr{A} = \{1,\ldots,p\}$ be a base block for $A$, and let $\mathscr{B}_1,\ldots,\mathscr{B}_p$ be the base blocks corresponding to $B$, each of size $2$. Choose them so that $|\mathscr{B}_1 \cap \mathscr{A}| = 1$.

Suppose $|\mathscr{B}_i \cap \mathscr{A}|=1$ for all $i=1,\ldots,p$, without loss of generality $\mathscr{B}_i = \{i,p+i\}$. Let $C$ be the maximal intransitive subgroup stabilizing $\{1,2,p+1\}$ and its complement.
\begin{itemize}
\item $(1 \ldots p)(p+1 \ldots 2p) \in (A \cap B \cap A_n)-C$.
\item $(1\ p+1) (3\ p+3) \in (B \cap C \cap A_n)-A$.
\item $(12)(34) \in (A \cap C \cap A_n)-B$.
\end{itemize}
We may now assume that $\mathscr{B}_2 = \{r,s\} \subseteq \mathscr{A}$ and $\mathscr{B}_3 = \{y,z\}$ is disjoint from $\mathscr{A}$. Let $x \in \mathscr{B}_1-\mathscr{A}$, $t \in \mathscr{B}_1 \cap \mathscr{A}$, and let $C$ be the maximal intransitive subgroup stabilizing $\{x,y,t\}$ and its complement.
\begin{itemize}
\item $(yz)(rs) \in (A \cap B \cap A_n)-C$.
\item $(xy)(rs) \in (A \cap C \cap A_n)-B$.
\item $(xt)(rs) \in (B \cap C \cap A_n)-A$.
\end{itemize}
\item Suppose $A$ and $B$ are of the form $S_2 \wr S_p$. Since $A \neq B$ there are a base block $\mathscr{A}$ of $A$ and a base block $\mathscr{B}$ of $B$ distinct and with a non-empty intersection, without loss of generality $\mathscr{A}=\{1,2\}$ and $\mathscr{B}=\{1,3\}$. Let $\{3,x\}$ be the base block of $A$ containing $3$ and let $\{2,y\}$ be the base block of $B$ containing $2$. Let $C$ be the stabilizer of the point $1$.

First we prove that $A \cap C \cap A_n \not \subseteq B$ and $B \cap C \cap A_n \not \subseteq A$. If $x \neq y$ let $\{y,z\}$, $\{x,w\}$ be blocks of $A$ and $B$ respectively, we have $(3x)(yz) \in (A \cap C)-B$, $(2y)(xw) \in (B \cap C)-A$ and they also belong to $A_n$, if $x=y=4$ let $\{\alpha,\beta\}$, $\{\gamma,\delta\}$ be new blocks of $A$ and $B$ respectively, then $(34)(\alpha \beta) \in (A \cap C)-B$, $(24)(\gamma \delta) \in (B \cap C)-A$.

We are left to show that $A \cap B \cap A_n$ is not contained in $C$. Construct $\gamma \in A_n$ in the following way. Set $a_1=b_1=1$, $a_2=2$, $b_2=3$ and write the two block systems (corresponding to $A$ and to $B$) as follows: $$\{a_1,a_2\},\ \{b_2,a_3\},\ \{b_3,a_4\},\ \{b_4,a_5\}, \ldots,\ \{b_k,a_{k+1}\}, \ldots$$ $$\{a_1,b_2\},\ \{a_2,b_3\},\ \{a_3,b_4\},\ \{a_4,b_5\},\ \ldots, \{a_k,b_{k+1}\}, \ldots$$where $a_{k+1}=b_{k+1}$.

We want to construct $\gamma \in A_n$ fixing all the elements not belonging to $\{a_1,\ldots,a_{k+1},b_2,\ldots,b_k\}$ and with the property that $\gamma \in A \cap B$ and $\gamma(a_1)=a_3$, so that $\gamma(1) \neq 1$, implying $\gamma \not \in C$. As it turns out there is only one way to do so. If $k=2$ let $\gamma=(a_1 a_3)(a_2 b_2)$, if $k=3$ let $\gamma=(a_1 a_3 b_3)(a_2 b_2 a_4)$. If $k>3$ is odd let $$\gamma := (a_1 a_3 a_5 \ldots a_k b_k b_{k-2} b_{k-4} \ldots b_3) (a_2 b_2 b_4 b_6 \ldots b_{k-1} a_{k+1} a_{k-1} a_{k-3} \ldots a_4),$$whereas if $k>3$ is even let $$\gamma := (a_1 a_3 a_5 \ldots a_{k+1} b_{k-1} b_{k-3} \ldots b_3) (a_2 b_2 b_4 b_6 \ldots b_k a_k a_{k-2} a_{k-4} \ldots a_4).$$ Clearly $\gamma \in A_n$, since it is a product of two $k$-cycles in both cases. It is easy to show that $\gamma \in (A \cap B \cap A_n)-C$.
\item Suppose $A$ is of the form $S_2 \wr S_p$ and $B$ is of the form $S_k \times S_{n-k}$. Say $B$ stabilizes $\mathscr{B}$ and its complement with $|\mathscr{B}| > p$. The cardinality condition implies that $\mathscr{B}$ contains a base block $\mathscr{A}_1=\{x,y\}$. Suppose $\mathscr{B}$ intersects two base blocks $\mathscr{A}_2,\mathscr{A}_3$ in exactly one element, say $\mathscr{A}_2 = \{a,b\}$, $\mathscr{A}_3=\{z,w\}$ with $a,z \in \mathscr{B}$, $b,w \not \in \mathscr{B}$. Let $C$ be the maximal intransitive subgroup stabilizing $\{y,z,w\}$ and its complement.
\begin{itemize}
\item $(az)(bw) \in (A \cap B \cap A_n)-C$.
\item $(ab)(wz) \in (A \cap C \cap A_n)-B$.
\item $(ax)(yz) \in (B \cap C \cap A_n)-A$.
\end{itemize}
Suppose $\mathscr{B}$ contains two base blocks $\mathscr{A}_1=\{x,y\}$, $\mathscr{A}_2=\{a,b\}$ and intersects another base block $\mathscr{A}_3=\{z,w\}$ in exactly one element, say $z \in \mathscr{B}$, $w \not \in \mathscr{B}$. Let $C$ be the maximal intransitive subgroup stabilizing $\{y,z,w\}$ and its complement.
\begin{itemize}
\item $(ax)(by) \in (A \cap B \cap A_n)-C$.
\item $(ab)(wz) \in (A \cap C \cap A_n)-B$.
\item $(abx) \in (B \cap C \cap A_n)-A$.
\end{itemize}
We are left to discuss the case in which $\mathscr{B}$ is a union of some $\mathscr{A}_i$'s (blocks of $A$). In this case let $\{x,y\}$, $\{a,b\}$ be two blocks contained in $\mathscr{B}$ and let $\{t,w\}$ be a block disjoint from $\mathscr{B}$. Let $C$ be the maximal intransitive subgroup stabilizing $\{x,t\}$ and its complement.
\begin{itemize}
\item $(xy)(tw) \in (A \cap B \cap A_n)-C$.
\item $(yw)(xt) \in (A \cap C \cap A_n)-B$.
\item $(yab) \in (B \cap C \cap A_n)-A$.
\end{itemize}
\item Suppose $A$ and $B$ are maximal intransitive. Say $A$ stabilizes $\mathscr{A}$ and its complement, and $B$ stabilizes $\mathscr{B}$ and its complement.

Suppose first that $\mathscr{A} \subseteq \mathscr{B}$ or $\mathscr{B} \subseteq \mathscr{A}$. Without loss of generality assume the former case occurs. Up to exchanging $\mathscr{A}$ and $\mathscr{B}$ we may assume that $\mathscr{A}$ contains at least three distinct elements $a,x,y$. Let $b \in \mathscr{B}-\mathscr{A}$ and $z \in \{1,\ldots,n\}-\mathscr{B}$. Let $C$ be the maximal intransitive subgroup stabilizing $\{b,y,z\}$ and its complement.
\begin{itemize}
\item $(axy) \in (A \cap B \cap A_n)-C$.
\item $(ax)(bz) \in (A \cap C \cap A_n)-B$.
\item $(ax)(by) \in (B \cap C \cap A_n)-A$.
\end{itemize}
We may therefore assume that $\mathscr{A} \not \subseteq \mathscr{B}$ and $\mathscr{B} \not \subseteq \mathscr{A}$. Let $a \in \mathscr{A}-\mathscr{B}$, $b \in \mathscr{B}-\mathscr{A}$. By replacing, if needed, $\mathscr{A}$ or $\mathscr{B}$ or both with their complements we may assume that $|\mathscr{A} \cap \mathscr{B}| \geq (p+1)/2 \geq 4$. Let $x,y,z \in \mathscr{A} \cap \mathscr{B}$. Let $C$ be the maximal intransitive subgroup stabilizing $\{a,b,x\}$ and its complement.
\begin{itemize}
\item $(xyz) \in (A \cap B \cap A_n)-C$.
\item $(ax)(yz) \in (A \cap C \cap A_n)-B$.
\item $(xb)(yz) \in (B \cap C \cap A_n)-A$.
\end{itemize}
\end{enumerate}

\section{Acknowledgements}

We would like to thank the referee for carefully reading a previous version of this paper and having very useful suggestions.

\end{document}